\documentclass[12pt]{amsart}

\usepackage{enumerate, amsmath, amsthm, amsfonts, amssymb, xy,  mathrsfs, graphicx, paralist, fancyvrb, ytableau, tikz-cd,epic,wasysym,upgreek}
\usepackage[usenames, dvipsnames]{xcolor}
\usepackage[margin=1in]{geometry} 
\usepackage[bookmarks, colorlinks=true, linkcolor=blue, citecolor=blue, urlcolor=blue]{hyperref}

\input xy
\xyoption{all}

\numberwithin{equation}{subsection}
\newtheorem{theorem}[equation]{Theorem}

\newtheorem{proposition}[equation]{Proposition}
\newtheorem{lemma}[equation]{Lemma}
\newtheorem{corollary}[equation]{Corollary}

\theoremstyle{definition}
\newtheorem{rmk}[equation]{Remark}
\newenvironment{remark}[1][]{\begin{rmk}[#1] \pushQED{\qed}}{\popQED \end{rmk}}
\newtheorem{eg}[equation]{Example}
\newenvironment{example}[1][]{\begin{eg}[#1] \pushQED{\qed}}{\popQED \end{eg}}
\newtheorem{defn}[equation]{Definition}

\makeatletter
\@addtoreset{equation}{section}
\renewcommand{\thesubsection}{%
  \ifnum\c@subsection<1 \@arabic\c@section
  \else \thesection.\@arabic\c@subsection
  \fi
}
\makeatother

\newcommand{\fm}{\mathfrak{m}}

\renewcommand{\phi}{\varphi}

\makeatletter
\def\Ddots{\mathinner{\mkern1mu\raise\p@
\vbox{\kern7\p@\hbox{.}}\mkern2mu
\raise4\p@\hbox{.}\mkern2mu\raise7\p@\hbox{.}\mkern1mu}}
\makeatother

\newtheorem*{mainresult}{Main Result}
\newenvironment{Main Result}[1][]{\begin{mainresult}[#1] \pushQED{\qed}}{\popQED \end{mainresult}}

\title{Herzog Class of Codimension $3$ Licci Monomial Ideals}
\author{Suhas Vadan Gondi}
\address{Department of Mathematics, University of California San Diego}
\email{sgondi@ucsd.edu}

\begin{document}
\begin{abstract}
    Let $S$ be the polynomial ring in three variables with homogeneous maximal ideal $\fm$. Licci ideals are ideals that can be linked by a finite number of steps to a complete intersection. All licci ideals admit a rigid deformation, and two licci ideals are said to be in the same Herzog class if they have a common rigid deformation. Guerrieri-Ni-Weyman gave a parametrization for the Herzog classes of codimension $3$ licci ideals in terms of pairs of partitions associated to Schur functors. In this paper, we give an explicit formula that determines the Herzog class of a given $\fm$-primary licci monomial ideal.     
\end{abstract}
\maketitle
\section{Introduction}

Linkage theory provides a means of studying ideals through other ideals to which they are linked. Its foundations were established by Peskine and
Szpiro \cite{PeskineSzpiro1974}, and its algebraic structure was developed further
by Huneke and Ulrich \cite{HunekeUlrich1987}. An ideal is \emph{licci} if it can be
connected to a complete intersection by a finite sequence of links.

Roughly, a deformation of an ideal is a ring--ideal pair from which the original
pair is recovered by quotienting by a sequence that is regular in both the
ambient ring and its quotient. An ideal is said to be rigid if it defines a rigid algebra. Buchweitz proved the strong
unobstructedness of licci ideals \cite{Buchweitz81}, and Herzog's results on
strongly unobstructed algebras imply that they admit rigid deformations, 
unique up to isomorphism and trivial deformation \cite{Herzog80}.
Two licci ideals belong to the same \emph{Herzog class} if they have a
common rigid deformation. Describing these classes amounts to finding
generic models for families of licci ideals.

In codimension two, every perfect ideal is licci, and the Hilbert--Burch
theorem provides such models in terms of ideals of maximal minors of generic
$n\times(n-1)$ matrices. In particular, there is a one-to-one correspondence between Herzog classes and the number of minimal generators $n\geq2$. In codimension three,
the number of generators and the Cohen--Macaulay type no longer determine
the Herzog class, and the problem of classifying and determining Herzog classes is significantly harder. Let $S=\mathbb{C}[x,y,z]$ and $\fm=(x,y,z).$ An $\fm$-primary monomial ideal is an ideal that can be generated by monomials and whose radical is $\fm$. The main result of this paper is the following. See Theorem \ref{thm:closed-new} for the specific statement.

\begin{mainresult}
    We provide a closed formula that determines the Herzog class of an $\fm$-primary licci monomial ideal $I$ of $S$ in terms of its minimal generators. 
\end{mainresult}

We now briefly review the development of the techniques that we were fortunate enough to have at our disposal in solving this problem. The starting point is the groundbreaking work of Weyman, who began studying generic free resolutions of length three in 1989 \cite{Weyman1989}, generalizing the Buchsbaum--Eisenbud structure theorem for grade three Gorenstein ideals in terms of Pfaffians of alternating matrices \cite{BuchsbaumEisenbud1977}.  For a fixed resolution format, he began the construction of the generic ring
$\widehat R_{\mathrm{gen}},$ which carries a complex from which resolutions of
that format are obtained by specialization. His work was completed in 2018 \cite{Weyman2018}, revealing a beautiful connection between the structure theory of free resolutions of the form $$ 0 \to \mathbb{F}_3\xrightarrow{d_3} \mathbb{F}_2\xrightarrow{d_2} \mathbb{F}_1\xrightarrow{d_1} R\to R/I $$ 
and representation theory, by associating a Kac--Moody Lie algebra whose diagram is $T$-shaped and depends on the format of the resolution. The representations of this Lie algebra organize the
additional structure carried by a resolution, extending its
differentials and multiplicative structure. Sam and Weyman~\cite{SamWeyman2021} gave geometric constructions of resolutions
in Dynkin formats using Schubert varieties. 

These tools were recently applied to linkage by Guerrieri, Ni, and
Weyman \cite{GuerrieriNiWeyman2026Linkage}, building on their earlier work \cite{GuerrieriNiWeyman2026}. Let $\mathbb F$ be a minimal free resolution of $R/I,$ where $I$ is a grade-three perfect ideal in a local ring $(R,\mathfrak n)$ containing a field of
characteristic zero. Specializing the generic ring gives higher
structure maps associated to graded representations of the corresponding
Kac--Moody Lie algebra. We refer the reader to \cite{GuerrieriNiWeyman2026, FilippiniGuerrieri2025, GuerrieriWeyman2023} for a detailed account of these higher structure maps. The ideal $I$ is licci if and only if the first higher structure map $w^{(1)}$ is nonzero
modulo $\mathfrak n$. Moreover, its Herzog class is determined by the lowest
extremal component, in Bruhat order, on which the map contains a unit \cite{GuerrieriNiWeyman2026Linkage, LinkageSchutFuctors}. Writing this component in terms of Schur
functors gives a pair of partitions $(\lambda,\mu)$, called the
\emph{decoration} of $I$. In the notation of~\cite{LinkageSchutFuctors}, it has the
form $S_\lambda F_1\otimes S_\mu F_3^*$. 

Guerrieri, Ni, and
Weyman describe the resulting parametrization of Herzog
classes and give explicit rules for how the partitions change under
linkage. In particular, their work gives an algorithmic way of computing the Herzog class of a given codimension $3$ licci ideal provided that we know precisely how it is linked to a complete intersection. However, there is no direct formula to obtain the Herzog class of a given ideal. In this article, we address this problem for a distinguished family of licci ideals, namely $\fm$-primary monomial licci ideals. 

Huneke and Ulrich \cite{MonomialLicci} describe an explicit combinatorial structure on the unique monomial generating set of an $\fm$-primary licci monomial ideal $I$. Using this, we construct an explicit linkage chain that connects $I$ to a complete intersection, and use the results from \cite{LinkageSchutFuctors} to obtain a recursive formula for its Herzog class. This recursion is dynamic, meaning that the coefficients vary from step to step according to the linkage data. Perhaps the most novel part of this paper is to solve this recurrence relation to obtain a closed formula in terms of combinatorial data attached to $I.$ We hope that the techniques and results developed here will also prove useful in obtaining direct formulas for the Herzog classes of other licci ideals.

\subsection{Organisation of the Paper}
In Section \ref{Section:Prelims}, we introduce the relevant notation and prerequisites. In particular, we recall the definitions of licci ideals, deformations, and Herzog classes, and summarize the results of \cite{MonomialLicci} on $\fm$-primary licci monomial ideals and of \cite{LinkageSchutFuctors} on the combinatorics of decorations. In Section \ref{Section:CombinatorialData}, we introduce the combinatorial constructions associated to these ideals that are used in the following section. Section \ref{Section:Mainresult} contains the statement and proof of the main result. We prove the recursive formula in Subsection \ref{Subsec:rec} and use it to prove the closed formula in Subsection \ref{Subsec:Closed}.   

\subsection{Acknowledgments}
I am grateful to Lorenzo Guerrieri and Jerzy Weyman for introducing me to this area, and for several useful discussions. I would also like to thank Jacinta Torres for helpful conversations. Finally, I thank Steven Sam for funding my visit to Jagiellonian University, where this project began.

\section{Preliminaries and Notation}\label{Section:Prelims}
For this paper, we work over the polynomial ring $S=\mathbb{C}[x,y,z]$ with maximal ideal $\fm=(x,y,z).$ For convenience, we state the preliminary results specifically for this setup, although the results hold in greater generality.

\subsection{Licci Ideals, Deformations and Herzog Classes}
Two proper ideals $I,J\subset S$ of height three are \emph{directly linked}
if there is a regular sequence $f_1,f_2,f_3\in I\cap J$ such that

$$
 I=(f_1,f_2,f_3):J \qquad\text{and}\qquad J=(f_1,f_2,f_3):I.
$$ Here, the colon operator is defined by $$(f_1,f_2,f_3): I=\{w\in S \mid wI\subset (f_1,f_2,f_3)\}.$$ We denote a direct link by $I\sim J$. An ideal is \emph{licci} if a finite sequence of direct links connects it
to a complete intersection. 

Following \cite[\S 2.4]{LinkageSchutFuctors},
a ring-ideal pair $(R,J)$ is a \emph{deformation} of $(S,I)$ if
$$
 (S,I)=\bigl(\frac{R}{(\boldsymbol u)},
                  \frac{J+(\boldsymbol u))}{(\boldsymbol u)}\bigr),
$$
where $\boldsymbol u$ is regular in both $R$ and $R/J$.
A deformation is \emph{trivial} if, up to isomorphism, it is obtained by
adjoining formal power series variables:
$$
 \bigl(S[[t_1,\ldots,t_r]],\,IS[[t_1,\ldots,t_r]]\bigr).
$$
We say that $I\subset S$ is rigid if it only admits trivial deformations. By results of Buchweitz and Herzog \cite{Buchweitz81,Herzog80}, every licci ideal admits a rigid
deformation, unique up to isomorphism and trivial deformation. Two licci ideals belong to the same \emph{Herzog class} if they admit a common rigid deformation.

\subsection{$\fm$-primary Licci Monomial Ideals}\label{mprimarySection}

An ideal $I\subset S=\mathbb C[x,y,z]$ is a \emph{monomial ideal} if it
can be generated by monomials, and it is \emph{$\mathfrak m$-primary}
if $\sqrt I=\mathfrak m=(x,y,z)$. Every $\mathfrak m$-primary monomial
ideal has a unique \emph{standard form}
$$
 I=(x^\alpha,y^\beta,z^\gamma)+I^{\#},
$$
where $\alpha,\beta,\gamma$ are positive integers and the minimal monomial generators
of $I^{\#}$, together with $x^\alpha,y^\beta,z^\gamma$, form the minimal monomial
generating set of $I$. In particular, $I^{\#}=0$ if and only if $I$ is
a complete intersection. If $I^{\#}\ne0$, let
$x^a y^b z^c$ be the greatest common divisor of its minimal
monomial generators. Then
$$
 I^{\#}=x^a y^b z^c K,
$$
where $K$ is uniquely determined and its minimal monomial generators have
greatest common divisor $1$. Here
$0\le a<\alpha$, $0\le b<\beta$, and $0\le c<\gamma$.
If $I^{\#}$ is principal, then $K=S$, otherwise $K$ is a proper monomial
ideal of height at least two.

Set $I^{\{0\}}=I$ and define
$$
 I^{\{1\}}=
 \begin{cases}
 S,& I^{\#}=0,\\[2pt]
 (x^{\alpha-a},y^{\beta-b},z^{\gamma-c})+K,& I^{\#}\ne0.
 \end{cases}
$$

Then, by \cite[Lemma 2.5]{MonomialLicci} $I$ and $I^{\{1\}}$ are linked by the following double link: 
$$ I \sim  (x^\alpha, y^\beta, z^\gamma):I \sim  I^{\{1\}},
$$ where the first link is via the regular sequence $\{x^\alpha, y^\beta, z^\gamma\}$ while the second link is via the regular sequence $\{x^{\alpha-a}, y^{\beta-b}, z^{\gamma-c}\}.$ If $I^{\#}$ is principal, the first link is a direct link between $I$ and a complete intersection. If $I^{\#}=0,$ then $I$ is already a complete intersection.

For $r\ge2$, define recursively
$$
 I^{\{r\}}=\bigl(I^{\{r-1\}}\bigr)^{\{1\}},
 \qquad\text{provided }I^{\{r-1\}}\ne S.
$$

We caution that $(x^{\alpha-a},y^{\beta-b},z^{\gamma-c})+K$ may not be the standard form of $I^{\{1\}},$ as it is possible that $K$ contains pure powers. Thus, we first need to express it in standard form before defining $I^{\{2\}}.$ 

\begin{theorem}(\cite[Theorem 2.6]{MonomialLicci})
\label{thm:hu-criterion}
Let $I\subset S=\mathbb C[x,y,z]$ be an $\mathfrak m$-primary monomial
ideal. The following conditions are equivalent:
\begin{enumerate}
 \item $I_{\mathfrak m}$ is licci in $S_{\mathfrak m}$.
 \item $I$ can be linked to a complete intersection using monomial
 regular sequences in $S$.
 \item $I^{\{k\}}=S$ for some $k\ge1$.
\end{enumerate}
\end{theorem}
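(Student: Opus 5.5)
The plan is to prove the cycle $(3)\Rightarrow(2)\Rightarrow(1)\Rightarrow(3)$. For $(3)\Rightarrow(2)$, I would iterate the double link of \cite[Lemma 2.5]{MonomialLicci}. If $I^{\{k\}}=S$, let $r$ be the last index with $I^{\{r\}}\neq S$. For every $j<r$ the ideal $I^{\{j\}}$ is doubly linked to $I^{\{j+1\}}$, and both regular sequences used are pure powers of $x,y,z$, hence monomial. The step $I^{\{r\}}\mapsto I^{\{r+1\}}=S$ happens exactly when $(I^{\{r\}})^{\#}=0$, so $I^{\{r\}}$ is already a monomial complete intersection. Concatenating the links gives a chain of monomial links from $I$ to a complete intersection. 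Two routine points have to be checked along the way: each $I^{\{j\}}$ is again an $\mathfrak m$-primary monomial ideal, and it is rewritten in standard form before the next step, as the caution above requires.

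For $(2)\Rightarrow(1)$, I would note that links in $S$ localize to links in $S_{\mathfrak m}$. Each ideal in the chain is $\mathfrak m$-primary or a complete intersection supported at $\mathfrak m$, so regular sequences stay regular after localizing and colon ideals commute with localization. Hence $I_{\mathfrak m}$ is licci.

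The real content is $(1)\Rightarrow(3)$, which I expect to be the main obstacle. Suppose $I_{\mathfrak m}$ is licci but $I^{\{k\}}\neq S$ for all $k$. The first thing to establish is that the sequence $I^{\{k\}}$ stabilizes. Its colength and its pure-power exponents are non-increasing, and they strictly decrease unless $a=b=c=0$. So eventually $I^{\{k\}}=J$ with $J=(x^\alpha,y^\beta,z^\gamma)+K$, where the generators of $K$ have gcd $1$ and $J$ is not a complete intersection.

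Since licci is preserved under the double links, $J_{\mathfrak m}$ is licci. I would then invoke the Huneke--Ulrich obstruction: a licci ideal $J$ that is not a complete intersection satisfies $J\not\subset \mathfrak m\,(\mathbf f):J$ for a minimal regular sequence $\mathbf f$ in $J$. For monomial ideals one may take $\mathbf f=(x^\alpha,y^\beta,z^\gamma)$. The gcd-$1$ condition on the generators of $K$ should force the link $(x^\alpha,y^\beta,z^\gamma):J$ to contain $J$ modulo $\mathfrak m$ in a way that violates this obstruction. Concretely, the socle-type generator $x^{\alpha-1}y^{\beta-1}z^{\gamma-1}$ and the generators coming from $K$ must be compared monomial by monomial, which is where I expect most of the work to lie. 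This contradiction shows the sequence must reach $S$.
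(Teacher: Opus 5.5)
The paper does not prove this statement; it quotes it from \cite{MonomialLicci}, so your proposal can only be judged on its own. Your implications $(3)\Rightarrow(2)\Rightarrow(1)$ are fine. So is the first half of $(1)\Rightarrow(3)$: if $a=b=c=0$ then $I^{\{1\}}=I$, and otherwise the sum of the pure-power exponents drops. Hence the sequence either reaches $S$ or stalls at a fixed point $J=(x^\alpha,y^\beta,z^\gamma)+K$ with $K\neq 0$ and $\gcd(K)=1$.

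\textbf{The gap is the last step, and it is the entire content of the theorem.} First, the obstruction you invoke is false as stated. Take $A=(x^2,y^2,z^2,xyz)$. Then $(x^2,y^2,z^2):A=(x^2,y^2,z^2):xyz=\mathfrak m$, so $A$ is directly linked to a complete intersection and is licci. It is not a complete intersection, and $x^2,y^2,z^2$ are minimal generators of $A$. Yet $A\subseteq\mathfrak m^2=\mathfrak m\,((x^2,y^2,z^2):A)$.

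Second, even if some such statement were available, it is not violated at the fixed points you must exclude. The ideal $J=\mathfrak m^2=(x^2,y^2,z^2)+(xy,xz,yz)$ has $\gcd(K)=1$. With your choice $\mathbf f=(x^2,y^2,z^2)$, one computes $(\mathbf f):\mathfrak m^2=\mathfrak m^2$, so $\mathfrak m\,((\mathbf f):J)=\mathfrak m^3\not\supseteq J$. No monomial-by-monomial comparison of $x^{\alpha-1}y^{\beta-1}z^{\gamma-1}$ with the generators of $K$ can produce the contradiction you are aiming for.

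Here is what the hypothesis $\gcd(K)=1$ actually gives. Each generator $u$ of $K$ has $y$- and $z$-exponents below $\beta$ and $\gamma$. Hence $x^{\alpha-1}\in(\mathbf f):J$ if and only if $x$ divides every generator of $K$. So $\gcd(K)=1$ says exactly that $x^\alpha,y^\beta,z^\gamma$ are minimal generators of both $J$ and $(\mathbf f):J$; this is the $a=0$ case in the proof of Lemma \ref{lem:chain-new}. The double link by pure powers then returns $J$ itself, with no progress; $\mathfrak m^2$ is even self-linked via $x^2,y^2,z^2$.

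Turning this configuration into ``not licci'' requires a genuine necessary condition for licciness from Huneke--Ulrich's structure theory of linkage, one that every such fixed point violates, $\mathfrak m^2$ included. Identifying that condition and checking it against the gcd-$1$ structure is the missing idea. Your proposal does not supply it.
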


\subsection{Decorations and Combinatorics of Links}
In this section, we summarize the required results from Sections 3.3 and 3.4 of \cite{LinkageSchutFuctors} for our specific situation. We only talk about the combinatorics involved without alluding to the higher structure maps and structure theory for length $3$ free resolutions, which play a central role in the proofs of these results. 

To each Herzog class of codimension $3$ licci ideals we can univocally assign a pair of partitions $(\lambda, \mu).$ In particular, this assigns a pair of partitions to each height $3$ licci ideal $I\subset S,$ and we refer to this as the \emph{decoration} of $I$. 

Let $I$ be a height $3$ licci ideal and $(\lambda, \mu)$ its decoration. Let $\mu(I)$ denote the minimal number of generators required to generate $I,$ and let $t(I)$ denote the type of $I$. 

The following properties hold for the decoration $(\lambda, \mu)$ of $I$:
\begin{enumerate}
    \item Let $b$ denote the minimal number of generators required to generate $I.$ Then $\lambda$ has exactly $b$ parts.
    \item Let $t$ denote the type of $I$. Then $\mu$ has exactly $t$ parts.
    \item Let $\kappa=\sum\mu_i.$ Then $\sum \lambda_i=2\kappa+1,$ and we will refer to $\kappa$ as the \emph{level} of I. We use $\kappa(I)$ sometimes to indicate which ideal's level we are talking about.
    \item $\sum_i\lambda_i^2+\sum_j\mu_j^2= (k+1)^2$ holds \cite[\S 5 ]{LinkageSchutFuctors}
    \item The decoration of a codimension $3$ complete intersection is $((1^3), (1)).$ 
\end{enumerate}

Now, assume that $I$ is an $\fm$-primary licci monomial ideal. $I$ has a unique minimal generating set consisting of monomials. Since the number of parts of $\lambda$ equals the number of minimal generators, we can try to create an assignment between the monomial generators and parts of $\lambda.$ There is a canonical way of doing this using special generating systems (SGS) \cite[Definition 3.3]{LinkageSchutFuctors}, that we now explain. Let $I=(x^a, y^b, z^c),$ which is a complete intersection with decoration $((1^3), (1)). $ In this case, an SGS is given by assigning $1$ to each monomial generator. Using this as a starting point, we may inductively define monomial SGSs for other licci ideals, using the combinatorial formula for decorations in linkage that we state below. 

Suppose that $I$ is an $\fm$-primary licci monomial ideal with decoration $(\lambda, \mu),$ and with an assignment between the parts of $\lambda$ and the minimal monomial generators of $I$ (an SGS). Let $x^{a_x}, y^{a_y}, z^{a_z}$ be the pure power generators and let $\lambda_x,$ $\lambda_y$ and $\lambda_z$ be their respective assigned parts. Consider the regular sequence $\{x^{b_x}, y^{b_y}, z^{b_z}\} \subset I$ where $b_c\geq a_c,$ for $c=x,y,z$ and let $$J=\{x^{b_x}, y^{b_y}, z^{b_z}\}: I.$$ 

For $c=x, y,z,$ define $$\lambda_c'=  \begin{cases}
 \lambda_c,& b_c=a_c,\\[2pt]
 0,& b_c>a_c.
 \end{cases}$$ Let $p=\kappa(I)+1-\lambda_x'-\lambda_y'-\lambda_z'.$ 
 
 \begin{proposition}\cite{LinkageSchutFuctors}\label{LinkageCombinatoricsTheorem} The decoration of $J$ is given by $(\lambda', \mu'),$ where  $$\lambda' = \mathrm{sort}(p+\lambda_x', p+\lambda_y', p+\lambda_z', \mu),$$ $$\mu'=\lambda \backslash \{\lambda_x', \lambda_y', \lambda_z'\}.$$ \end{proposition}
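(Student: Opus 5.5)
This proposition is imported from \cite{LinkageSchutFuctors}, so the plan is to specialize the general rule of Guerrieri--Ni--Weyman for decorations under a direct link to our monomial setting. That rule is stated for a link $I\sim J=(\boldsymbol f):I$ by an arbitrary regular sequence $\boldsymbol f=(f_1,f_2,f_3)$ contained in $I$. Fix a special generating system of $I$, so each minimal generator $g$ carries a part $\lambda_g$ of $\lambda$.

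\textbf{Step 1: recall the general rule.} The rule reads the link off from which of the $f_i$ are part of a minimal generating set of $I$. If $f_i$ is a minimal generator, it contributes its part $\lambda_{f_i}$. If $f_i\in\fm I$, it contributes $0$. With these contributions $\lambda'_i$, set $p=\kappa+1-\sum\lambda'_i$. Then:
\begin{itemize}
\item the new $\mu'$ consists of the remaining parts of $\lambda$, with the parts of the minimal generators used in $\boldsymbol f$ removed;
\item the new $\lambda'$ consists of the parts $p+\lambda'_i$ for the three elements of $\boldsymbol f$, which become minimal generators of $J$, together with the old $\mu$. The parts of $\mu$ correspond to the socle-type generators of $J$ obtained from the last map of the resolution of $S/I$ via the mapping cone.
\end{itemize}

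\textbf{Step 2: check that this matches generator counts.} The numbers agree with the mapping-cone description of the resolution of $S/J$:
\[
\mu(J)=t(I)+3-\#\{i: f_i \text{ minimal}\}, \qquad t(J)=\mu(I)-\#\{i: f_i\text{ minimal}\}.
\]
Here, if some $f_i$ is minimal, the corresponding summands cancel in the mapping cone.

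\textbf{Step 3: specialize to monomials.} In our situation $f=(x^{b_x},y^{b_y},z^{b_z})$. The element $x^{b_x}$ is a minimal generator of $I$ exactly when $b_x=a_x$, since $x^{a_x}$ is the unique minimal generator that is a power of $x$. If $b_x>a_x$, then $x^{b_x}=x^{b_x-a_x}x^{a_x}\in\fm I$. Hence the contributions of Step 1 are exactly the $\lambda'_c$ defined in the statement, and the formula follows. As sanity checks I would verify:
\begin{itemize}
\item property (3): $\sum\lambda'=3p+\sum\lambda'_c+\kappa=2\kappa'+1$, where $\kappa'=\sum\lambda-\sum\lambda'_c=2\kappa+1-\sum\lambda'_c$;
\item the complete intersection case: with $I=(x^a,y^b,z^c)$ and the same sequence, $p=2-3=-1$, which gives $J=S$ trivially, consistent.
\end{itemize}

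\textbf{Main obstacle.} The real content is Step 1, the identification in \cite{LinkageSchutFuctors} of how the lowest extremal component carrying a unit in $w^{(1)}$ transforms under linkage. That identification needs the higher structure maps, which we take as given. What remains to justify on our side is that the SGS assignment is compatible with taking pure powers as the $f_i$, i.e.\ that the SGS part attached to $x^{a_x}$ is the one the general rule uses. I would handle this by the inductive definition of monomial SGSs: the parts are assigned along the monomial linkage chain, so the compatibility holds by construction.
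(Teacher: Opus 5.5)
Your route is the paper's own. The paper gives no argument for this proposition beyond importing the rule from \cite{LinkageSchutFuctors}, and the only monomial-specific input is your Step~3 observation: $x^{b_x}$ is a minimal generator of $I$ exactly when $b_x=a_x$, and otherwise $x^{b_x}\in\fm I$. Your check of property (3) is also fine.

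However, Step~2 is wrong, and as written it contradicts the formula you are proving. Consider the resolution $0\to F_1^*\to F_2^*\oplus K_1^*\to F_3^*\oplus K_2^*\to S$ of $S/J$ coming from the dual mapping cone. A minimal $f_i$ gives a unit entry in $\alpha_1^*\colon F_1^*\to K_1^*$, so the cancellation occurs at the top of this resolution. It lowers $t(J)$ (your second formula is right) and the second Betti number, not $\mu(J)$. The generators of $J$ come from $K_2^*$ and $F_3^*$, so $\mu(J)\le t(I)+3$. That is exactly the number of entries of $\lambda'$: three new parts plus the $t(I)$ parts of $\mu$. The paper's own example refutes your count. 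For $I=(x^2,y^2,z,xy)$ we have $t(I)=2$ and two of $x^2,y^2,z^2$ are minimal, yet $J=(x^2,y^2,z^2,xz,yz)$ has five generators, matching $\lambda'=(2,2,1,1,1)$ rather than your predicted three. This error sits in a consistency check rather than in the citation itself, but it should be fixed.

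Relatedly, your claim in Step~1 that the three $f_i$ always become minimal generators of $J$ is false. Take $I=(x^2,y^2,z^2,xy)=(x^2,y^2,z^2):(x,y,z^2)$. By the proposition it has decoration $((2,1,1,1),(1,1))$, with parts $1,1,2$ on $x^2,y^2,z^2$. Linking back by $(x^2,y^2,z^2)$ gives $J=(x,y,z^2)$. Here $p=-1$, and the formula yields $p+\lambda'_x=p+\lambda'_y=0$; these zero parts are silently discarded by the sort. A drop of $\mu(J)$ below $t(I)+3$ comes from units in $\alpha_2^*\colon F_2^*\to K_2^*$, not from minimality of the $f_i$ in $I$.

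Finally, your ``main obstacle'' is misplaced. The SGS on $I$ is a hypothesis of the statement, so nothing needs checking there. What does need an argument, in order to iterate the proposition along a linkage chain, is that the induced assignment of $p+\lambda'_c$ to the pure powers of $J$ is again an SGS. That does not hold ``by construction'': an SGS is the notion of \cite[Definition 3.3]{LinkageSchutFuctors}, not an arbitrary labelling. The paper derives it from Lemmas 3.13 and 3.15 of \cite{LinkageSchutFuctors}. It meets the degree hypothesis of Lemma 3.15 by rewriting the pure powers as squarefree products in new variables and then specializing back.
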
 By sort, we mean to rearrange in decreasing order so that we obtain a partition. We note that $\kappa(J)=\kappa(I)+p.$ Further, by Lemmas 3.13 and 3.15 of \cite{LinkageSchutFuctors}, there is an assignment between the parts of $\lambda'$ and minimal monomial generators of $J$ induced by the assignment for the generators of $I$ that assigns $p+\lambda_c'$ to the pure powers in $J$ corresponding to $c=x,y,z.$ 

 \begin{remark}
     There is a technical point regarding the application of \cite[Lemma 3.15]{LinkageSchutFuctors} to obtain an SGS for $J$ that we now address. The Lemma requires the monomials to be of specific degrees for it to be applicable, and this may or may not be achievable directly with respect to some appropriate grading on the variables. If it is not possible, we may add variables to our ring and write the pure powers as square-free products over disjoint variables, so that we have the freedom to assign different degrees to different variables in order to satisfy the degree constraints in the Lemma. We hence obtain an SGS in this bigger ring, and we can then specialize back to $S$ and use the fact that specialization preserves SGSs to obtain an SGS for $J$.   
 \end{remark}

\begin{example}
Let
\[
I=(x^2,y^2,z,xy)\subseteq \mathbb C[x,y,z].
\]
Its decoration is
\[
(\lambda,\mu)=((2,1,1,1),(1,1)),
\]
with the following assignment of parts to its monomial generators:
\[
\begin{array}{c|cccc}
\text{generator}&x^2&y^2&z&xy\\ \hline
\text{assigned part}&1&1&2&1.
\end{array}
\]
Indeed, this assignment is obtained by linking the complete
intersection $(x,y,z)$ via $(x^2,y^2,z)$.

Now link $I$ using the regular sequence $x^2,y^2,z^2$.
A direct calculation gives
\[
J=(x^2,y^2,z^2):I=(x^2,y^2,z^2,xz,yz).
\]
The elements $x^2$ and $y^2$ are minimal generators of $I$,
whereas $z^2\in\mathfrak m I$. Thus
\[
\lambda'_x=1,\qquad
\lambda'_y=1,\qquad
\lambda'_z=0.
\]
Since $\kappa(I)=|\mu|=2$, we obtain
\[
p=\kappa(I)+1-\lambda'_x-\lambda'_y-\lambda'_z
 =3-1-1=1.
\]
Proposition~2.3.1 therefore yields
\[
\begin{aligned}
\lambda'
&=\operatorname{sort}(p+\lambda'_x,
                     p+\lambda'_y,
                     p+\lambda'_z,\mu)
 =(2,2,1,1,1),\\
\mu'
&=\lambda\setminus\{1,1\}
 =(2,1).
\end{aligned}
\]
 Consequently, the decoration of $J$ is
\[
{((2,2,1,1,1),(2,1))}.
\]
The new pure-power generators $x^2,y^2,z^2$ receive the
parts $2,2,1$, respectively, and
$\kappa(J)=3=\kappa(I)+p$.
\end{example}

\section{Combinatorial Data Attached to an $\fm$-primary Licci Monomial Ideal}\label{Section:CombinatorialData}
In this section, we define some combinatorial statistics on $\fm$-primary licci monomial ideals that we will use to give a formula for computing their Herzog class later in the paper.

Let $I$ be an $\fm$-primary licci monomial ideal. Theorem \ref{thm:hu-criterion} tells us what the minimal monomial basis of $I$ looks like. If we start by isolating the pure powers followed by dividing the remaining generators by their gcd and repeat this process, we eventually exhaust all the generators. Recording these sets of pure powers using $V_i$ and the gcd using $x^{a_i}y^{b_i}z^{c_i}$ in each step gives us the following proposition.

\begin{proposition}\label{DataProp}
    Suppose $I$ is an $m$-primary licci monomial ideal. There exist a nonnegative integer $k$, triples $
 (a_i,b_i,c_i)\in\mathbb Z_{\ge0}^3\setminus\{(0,0,0)\}$ for $1\leq i\leq k,$
and finite sets of monomials $V_1,\ldots,V_{k+1}$ such that the minimal generating monomial basis for $I$ is given by
$$\bigcup_{i=1}^{k+1}\{U_i v\mid v\in V_i\},$$
where
 $U_i=\prod_{j=i}^{k}x^{a_j}y^{b_j}z^{c_j}$ and
the empty product gives $U_{k+1}=1$.
Each $V_i$ consists of positive pure powers of $x$, $y$, and $z$,
with at most one power of each variable, except that $V_1$ is also
allowed to be $\{1\}$. Moreover, $V_{k+1}$ contains the three pure power generators of $I.$ 

\end{proposition}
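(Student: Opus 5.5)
The plan is to induct on the number of steps of the Huneke--Ulrich procedure. Since $I$ is licci, Theorem~\ref{thm:hu-criterion} gives a least $r\ge 1$ with $I^{\{r\}}=S$, and I would induct on $r$. The indices in the statement run in reverse: $V_{k+1}$ holds the pure powers of $I$ itself, and $V_1$ comes from the last step of the procedure. Throughout, write $I=(x^\alpha,y^\beta,z^\gamma)+I^{\#}$ in standard form and, when $I^{\#}\neq 0$, write $I^{\#}=x^ay^bz^cK$.

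\emph{Preliminary observation.} If $I^{\#}\ne 0$ and $(a,b,c)=(0,0,0)$, then $K=I^{\#}$ and $I^{\{1\}}=(x^\alpha,y^\beta,z^\gamma)+I^{\#}=I$. Hence $I^{\{j\}}=I\neq S$ for all $j$, contradicting Theorem~\ref{thm:hu-criterion}. So every gcd monomial that occurs is a nonunit, and the triples we record will lie in $\mathbb Z_{\ge0}^3\setminus\{(0,0,0)\}$.

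\emph{Base cases.}
\begin{itemize}
\item If $I^{\#}=0$, then $I$ is a complete intersection. Take $k=0$ and $V_1=\{x^\alpha,y^\beta,z^\gamma\}$.
\item If $I^{\#}$ is principal, then $K=S$ and $I^{\{1\}}=S$. Take $k=1$, $(a_1,b_1,c_1)=(a,b,c)$, $V_1=\{1\}$ and $V_2=\{x^\alpha,y^\beta,z^\gamma\}$. Then $U_1V_1=\{x^ay^bz^c\}$ and $U_2V_2$ is the set of pure powers, which together form the minimal generating set. This is exactly where the exceptional option $V_1=\{1\}$ is needed.
\end{itemize}

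\emph{Inductive step.} Suppose $I^{\#}$ is not principal, so $K$ is a proper monomial ideal. Then $I^{\{1\}}=(x^{\alpha-a},y^{\beta-b},z^{\gamma-c})+K$ is $\fm$-primary and reaches $S$ in $r-1$ steps. By induction it has data $k'$, $(a_i',b_i',c_i')$ and $V'_1,\dots,V'_{k'+1}$, with $V'_{k'+1}$ its three pure power generators.

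The key step, which I expect to be the only real obstacle, is to compare the minimal generators of $I^{\{1\}}$ with those of $K$. The claim is that every minimal generator $m$ of $K$ remains a minimal generator of $I^{\{1\}}$. Equivalently, $m$ is not divisible by $x^{\alpha-a}$, $y^{\beta-b}$ or $z^{\gamma-c}$. Indeed, if $x^{\alpha-a}\mid m$, then $x^\alpha$ divides $x^ay^bz^cm$, contradicting the minimality of $x^ay^bz^cm$ among the generators of $I$. Consequently
\[
\operatorname{gens}(I^{\{1\}})=\operatorname{gens}(K)\sqcup P,
\]
where $P\subseteq\{x^{\alpha-a},y^{\beta-b},z^{\gamma-c}\}$ consists of those pure powers not divisible by any generator of $K$. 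In particular $P\subseteq V'_{k'+1}$, and every element of $P$ has multiplier $U'_{k'+1}=1$. Therefore
\[
\operatorname{gens}(K)=\bigcup_{i\le k'}U'_iV'_i\ \cup\ \bigl(V'_{k'+1}\setminus P\bigr).
\]

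\emph{Assembling the data for $I$.} The minimal generators of $I$ are $\{x^\alpha,y^\beta,z^\gamma\}\cup x^ay^bz^c\operatorname{gens}(K)$. So set
\begin{itemize}
\item $k=k'+1$ and $(a_k,b_k,c_k)=(a,b,c)$;
\item $(a_i,b_i,c_i)=(a'_i,b'_i,c'_i)$ for $i\le k'$;
\item $V_i=V'_i$ for $i\le k'$, $V_{k'+1}=V'_{k'+1}\setminus P$, and $V_{k+1}=\{x^\alpha,y^\beta,z^\gamma\}$.
\end{itemize}
Then $U_i=U'_i\cdot x^ay^bz^c$ for $i\le k$ and $U_{k+1}=1$, which reproduces the displayed union.

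The remaining conditions are inherited from the inductive data:
\begin{itemize}
\item each $V_i$ contains at most one positive power of each variable;
\item $V_{k'+1}$ is a subset of a set of pure powers, possibly empty, which is allowed;
\item only $V_1$ can equal $\{1\}$;
\item $(a,b,c)\neq(0,0,0)$ by the preliminary observation.
\end{itemize}
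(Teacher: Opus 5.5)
Your route is the paper's. The paper only sketches this argument (peel off the pure powers, divide the remaining generators by their gcd, repeat, with termination from Theorem~\ref{thm:hu-criterion}), and your induction along $I^{\{j\}}$ makes it precise. Your key claim that minimal generators of $K$ stay minimal in $I^{\{1\}}$, and your argument that $(a,b,c)\neq(0,0,0)$, are both correct, and they are exactly what the sketch leaves implicit.

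There is one step that does not follow from your inductive hypothesis as stated. The identity $\operatorname{gens}(K)=\bigcup_{i\le k'}U'_iV'_i\cup(V'_{k'+1}\setminus P)$ requires $P\cap U'_iV'_i=\varnothing$ for all $i\le k'$. The proposition's conclusion does not guarantee this, because it admits degenerate witnesses. For $(x^2,y,z)$, the data $k=1$, $(a_1,b_1,c_1)=(1,0,0)$, $V_1=\{x\}$, $V_2=\{x^2,y,z\}$ satisfy every stated condition, yet $U_1V_1=\{x^2\}$ overlaps $V_2$.

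Now take $I=(x^3,y^3,z^2,xy^2,xyz)$. Here $x^ay^bz^c=xy$, $K=(y,z)$, $I^{\{1\}}=(x^2,y,z)$ and $P=\{x^2\}$. If the inductive hypothesis hands you that degenerate witness, your formula returns $\operatorname{gens}(K)=\{x^2,y,z\}$. The assembled data for $I$ then contain $U_1x=x^3y$, which is not a minimal generator.

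The fix is to carry the stronger statement $\bigcup_{i\le k}U_iV_i=\operatorname{gens}(I^{\#})$ through the induction; equivalently, no $U_iv$ with $i\le k$ is a pure power. This works for three reasons:
\begin{enumerate}
\item It holds in both base cases.
\item In the inductive step, applied to $I^{\{1\}}$, it makes $\bigcup_{i\le k'}U'_iV'_i$ consist of non-pure-powers, hence disjoint from $P$.
\item Your construction preserves it, since $x^ay^bz^c\operatorname{gens}(K)=\operatorname{gens}(I^{\#})$.
\end{enumerate}
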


\begin{example}
\label{ex:monomial-layer-data}
Consider
$$
 I=(x^{12},y^8,z^8,x^9z,x^2z^7,x^8z^2,x^6yz^3,
                    x^2y^6z^3,x^2yz^6,x^5y^3z^4,x^3y^5z^4).$$
After removing the three pure-power generators, the remaining generators
have greatest common divisor $x^2z$. Dividing by this monomial gives
$$
 (x^7,z^6,x^6z,x^4yz^2,y^6z^2,yz^5,x^3y^3z^3,xy^5z^3).
$$
Repeating the procedure yields the nested expression
$$
 I=(x^{12},y^8,z^8)
   +x^2z\Bigl((x^7,z^6)
   +z\bigl((x^6)+yz((x^4,y^5,z^3)+xy^2z(x^2,y^2))\bigr)\Bigr).
$$
Thus $k=4$. Indexing from the innermost layer to the outermost, we obtain
$$
\begin{array}{|c|c|c|c|}
\hline
 i & V_i & (a_i,b_i,c_i) & U_i\\ \hline
 1 & \{x^2,y^2\}       & (1,2,1) & x^3y^3z^4\\ 
 2 & \{x^4,y^5,z^3\}   & (0,1,1) & x^2yz^3\\ 
 3 & \{x^6\}           & (0,0,1) & x^2z^2\\ 
 4 & \{x^7,z^6\}       & (2,0,1) & x^2z\\
 5 & \{x^{12},y^8,z^8\}& \text{--} & 1
 \\ \hline
\end{array}
$$ \qedhere
\end{example}

Using the notation from Proposition \ref{DataProp}, we now define subsets $M_i\subset \{x,y,z\}$ for $1\leq i\leq 2k$ associated to $I$ as follows: $$M_{2i-1}=\{c \mid V_i \text{ contains no pure $c$-power}\} \qquad M_{2i}=\{c\mid c\text{ does not divide } x^{a_i}y^{b_i}z^{c_i}\}$$

We now construct a layered tricolor graph $G_I$ using these subsets. Consider the graph with layers ranging from $1$ to $2k.$ We define the vertex set of this graph as follows: $$V=\{t_c\mid c\in M_t\}.$$The vertex $t_c$ is placed at the layer $t\in \{1, \dots, 2k\}$ with coloring $c\in \{x,y,z\}.$ We construct edges between vertices in layers $t$ and $t+1$ for all $t\in \{1, \dots, 2k-1\}$ if and only if the vertices are of different color. Finally, define the subgraphs $G_I^t$ to be the subgraph obtained by restricting to the first $t$ layers, and define $G_I^0=\varnothing$. 

We say that a subgraph $L\subset G_I$ is \emph{special} if $L$ has at most one vertex at each level, and if $t_c$ and $(t+1)_d$ are vertices in $L,$ they must be connected by an edge. For a special subgraph $L$, we define $e(L)$ to be the number of edges in $L$ and $v(L)$ to be the number of vertices in $L$.

We will now motivate the definitions of these combinatorial statistics. We note that $I$ can be linked to a complete intersection through the method from \ref{thm:hu-criterion} using $2k$ steps in the even case and $2k-1$ steps in the odd case, where $k$ is the integer from Proposition \ref{DataProp}. The links use pure powers in $x$, $y$ and $z$ as the regular sequences, and by Proposition \ref{LinkageCombinatoricsTheorem} the way the decorations change only depends on whether or not the degrees of the pure powers in the regular sequence equal the degrees of the pure powers in the ideal being linked. The sets $M_t$ precisely capture this information, and are sufficient to determine the Herzog class of $I$. The graph $G_I$ is a way of representing this data, and it is convenient to give the closed formula in terms of $G_I.$

\begin{remark}\label{varsepsilonRemark}
     Let
$$
 \varepsilon=
 \begin{cases}
 0,&V_1\ne\{1\},\\
 1,&V_1=\{1\}.
 \end{cases}
$$
In the first case the canonical reverse linkage chain has $2k$ links since the ideal $I^{\{k-1\}}$ is of the form $$\bigl(x^{\alpha_2}, y^{\beta_2}, z^{\gamma_2}, x^{a_1}y^{b_1}z^{c_1}(x^{\alpha_1}, y^{\beta_1},z^{\gamma_1} )\bigr),$$ which is linked to the complete intersection via two links. In the second case it
has $2k-1$ links since the ideal $I^{\{k-1\}}$ is of the form $$(x^{\alpha_2}, y^{\beta_2}, z^{\gamma_2}, x^{a_1}y^{b_1}z^{c_1}),$$ which is linked to the complete intersection via a single link. As a consequence, when $\varepsilon=1$ we delete the $1$st layer from the graph $G_I$ and only consider the layers from $2$ to $2k$.
\end{remark}

\begin{example}\label{ex:graph-new}
For the ideal in Example \ref{ex:monomial-layer-data}, the subsets defined above are
$$
\begin{array}{c|cccccccc}
 t&1&2&3&4&5&6&7&8\\ \hline
 M_t&\{z\}&\varnothing&\varnothing&\{x\}&\{y,z\}&\{x,y\}&\{y\}&\{y\}.
\end{array}
$$
Thus $G_I$ has vertex set
$$
 V=\{1_z,4_x,5_y,5_z,6_x,6_y,7_y,8_y\}
$$
and edge set
$$
 \bigl\{\{4_x,5_y\},\{4_x,5_z\},\{5_y,6_x\},
        \{5_z,6_x\},\{5_z,6_y\},\{6_x,7_y\}\bigr\}.
$$
The graph is drawn below, with layers increasing from top to bottom.
The column labels record the three colors.
\begin{center}
\begin{tikzpicture}[x=1.7cm,y=0.8cm,
  vertex/.style={circle,draw,fill=white,inner sep=1pt,font=\small},
  xvertex/.style={vertex,draw=blue!70!black,fill=blue!8},
  yvertex/.style={vertex,draw=red!70!black,fill=red!8},
  zvertex/.style={vertex,draw=green!45!black,fill=green!8}]
\node at (-1,0) {layer};
\node at (0,0) {$x$};
\node at (1,0) {$y$};
\node at (2,0) {$z$};
\foreach \t in {1,...,8} {
  \node at (-1,-1.5*\t) {$\t$};
  \draw[gray!20,densely dotted] (-0.5,-1.5*\t)--(2.5,-1.5*\t);
}
\node[zvertex] (z1) at (2,-1.5) {$1_z$};
\node[xvertex] (x4) at (0,-6) {$4_x$};
\node[yvertex] (y5) at (1,-7.5) {$5_y$};
\node[zvertex] (z5) at (2,-7.5) {$5_z$};
\node[xvertex] (x6) at (0,-9) {$6_x$};
\node[yvertex] (y6) at (1,-9) {$6_y$};
\node[yvertex] (y7) at (1,-10.5) {$7_y$};
\node[yvertex] (y8) at (1,-12) {$8_y$};
\draw[thick] (x4)--(y5) (x4)--(z5) (y5)--(x6)
             (z5)--(x6) (z5)--(y6) (x6)--(y7);
\end{tikzpicture}
\end{center}
The vertices $1_z$ and $8_y$ are isolated. In particular, no edge joins
$7_y$ to $8_y$, since they have the same color.

An example of a special subgraph is 
$$
 \{1_z,4_x,5_z,6_x,8_y\},
$$
which has $v(L)=5$ and $e(L)=2$. The vertices $5_y,6_y$ cannot
both occur in a special subgraph, since they occupy consecutive layers
but have no edge between them. 
\end{example}

\begin{remark}[Inverse Problem] \label{inverseProblemRemark}
    For every codimension $3$ $\fm$-primary licci monomial ideal $I$ we constructed the sets $M_t$ and the graph $G_I$ associated to it. We can ask the inverse question on whether we can construct an ideal whose associated graph is a given graph. The answer in general is no without some additional assumptions. 

    We first observe that by definition, when $t>1$, $M_t\neq \{x,y,z\}$ must hold. In particular, $|M_t|\leq 2$ holds for all $t>1$ (and also for $t=1$ when $\varepsilon=0$ holds). Further, if $|M_t|=2$ for some $t,$ then $\{x,y,z\}\backslash M_{t}\subset M_{t-1}$ must hold. To see this, suppose that $|M_t|=2$ and $t$ is even. Without loss of generality say $\{x,y,z\}\backslash M_{t}=\{x\}$. Then, the corresponding ideal in the linkage chain looks like $$I_t=(x^\alpha,y^\beta, z^\gamma)+x^aK,$$ where $x^aK$ contains no pure power. Thus, $K$ does not contain a pure $x$-power, which implies that $x\in M_{t-1}.$ If $t$ was odd, then $$I_t=(x^\alpha)+x^ay^bz^cK,$$ and $a=0$ must hold, since the Huneke--Ulrich algorithm implies that the gcd of $I_t$ is $1.$ Thus, $x\in M_{t-1}$ in this case as well.  

    Thus, call the family of subsets $M_t\subset \{x,y,z\}$ for $1\le t\le 2k$ for some positive integer $k$ to be \emph{admissible} if \begin{enumerate}
        \item $|M_t|\leq 2$ for all $1\leq t\leq 2k.$
        \item If $|M_t|=2,$ then $\{x,y,z\}\backslash M_t\subset M_{t-1}$ holds for $1\leq t\leq 2k.$
    \end{enumerate}
    In particular, $|M_1|\leq 1$ must hold. Then, for every admissible family of subsets, there exists a codimension $3$ $\fm$-primary licci monomial ideal $I$ whose associated graph $G_I$ is the one that is defined by the admissible family of subsets. We have already proved one direction of this statement. The other direction follows from a simple (not necessarily unique) construction by recursively defining the generators for $I$ using Proposition \ref{DataProp}.
\end{remark}

\section{Main Result}\label{Section:Mainresult}
\subsection{Recursive Formula}\label{Subsec:rec}
In this section, we obtain a recursive formula for the Herzog class of an $\fm$-primary licci monomial ideal $I$. We retain the data $k,V_i,a_i,b_i,c_i,M_t$ and the graph $G_I$ from
Section \ref{Section:CombinatorialData}. 
For convenience, define $M_{2k+1}=\varnothing$.

\begin{lemma}\label{lem:chain-new}
There is a chain of links
$$
 J_{\varepsilon}\sim J_{\varepsilon+1}\sim\cdots\sim J_{2k}=I,
$$
with $J_{\varepsilon}$ a complete intersection, such that the link at
stage $t$ uses the three minimal pure powers of $J_t$. Among these,
the minimal generators of $J_{t-1}$ are exactly those with colors in
$M_t$. Here, $\varepsilon$ denotes $0$ or $1$ depending on the parity of the number of links (Remark \ref{varsepsilonRemark}). 
\end{lemma}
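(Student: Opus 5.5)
The plan is to build the chain explicitly from the Huneke--Ulrich double links of Section~\ref{mprimarySection}, read in reverse. Define $J_{2k}=I$. For $t=2k,2k-1,\dots$, let $J_{t-1}$ be the link of $J_t$ with respect to the three minimal pure powers of $J_t$, namely the ones that are minimal generators. This means: for even $t$, set $J_{t-1}=(x^\alpha,y^\beta,z^\gamma):J_t$, where $x^\alpha,y^\beta,z^\gamma$ are the pure-power minimal generators of $J_t$. For odd $t$, set $J_{t-1}$ to be the second link in \cite[Lemma 2.5]{MonomialLicci}, taken via $(x^{\alpha-a},y^{\beta-b},z^{\gamma-c})$. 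This sequence must then be shown to consist of the minimal pure powers of $J_{t-1}$ (see below). Since links are symmetric, $J_{t-1}\sim J_t$, and $J_{2(i-1)}$ will be identified with $I^{\{k-i+1\}}$ (suitably reindexed) using Proposition~\ref{DataProp}. By induction on $i$, going outward to inward, I would show
$$J_{2i}=(\text{pure powers in }V_{i+1}\text{ and others})+x^{a_i}y^{b_i}z^{c_i}\,K_i,$$
where $K_i$ is the ideal generated by the inner layers $V_1,\dots,V_i$ together with their gcd factors.

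The key step is the claim about minimal generators. Take the even stage $t=2i$, where $J_t=(x^\alpha,y^\beta,z^\gamma)+x^{a_i}y^{b_i}z^{c_i}K_i$ is in standard form. I would compute $J_{t-1}=(x^\alpha,y^\beta,z^\gamma):J_t$ via the monomial colon formula. The result is the ideal $(x^\alpha,y^\beta,z^\gamma)+(x^{\alpha}y^{\beta}z^{\gamma}/\text{socle-type terms})$, and its pure powers are $x^\alpha,y^\beta,z^\gamma$. I then check which of these remain minimal. The power $x^\alpha$ is a non-minimal generator of $J_{t-1}$ exactly when some element of $(x^\alpha,\dots):J_t$ is a smaller $x$-power. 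That happens iff $x^{\alpha-a_i}\cdot(\ldots)$ kills $J_t$, which is iff $x\mid x^{a_i}y^{b_i}z^{c_i}$, i.e.\ $a_i>0$. So $x^\alpha$ stays minimal iff $x\notin$ the gcd support, matching $M_{2i}$.

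Similarly, at the odd stage $t=2i-1$, the ideal $J_{t-1}$ arises from the link via $(x^{\alpha-a_i},y^{\beta-b_i},z^{\gamma-c_i})$, producing $(x^{\alpha-a_i},\dots)+K_i$. In this ideal, $x^{\alpha-a_i}$ is a minimal generator iff $K_i$ contains no pure $x$-power, i.e.\ iff $V_i$ has no $x$-power, matching $M_{2i-1}$. I must also verify that $x^{\alpha-a_i},y^{\beta-b_i},z^{\gamma-c_i}$ are indeed the minimal pure powers of the ideal $J_{2i-1}$ computed at the previous step. This amounts to reading the pure powers off the explicit colon ideal $(x^\alpha,y^\beta,z^\gamma):J_{2i}$. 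That colon ideal contains $x^{\alpha-a_i}$ times a monomial in $y,z$ unless it is a pure power, and some care is needed here. I expect this identification of the pure powers of the intermediate ideal $J_{2i-1}$ to be the main obstacle, because the intermediate ideal is not in standard form a priori.

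Finally, the base case follows Remark~\ref{varsepsilonRemark}. If $V_1=\{1\}$, then $J_1=(x^{\alpha_2},y^{\beta_2},z^{\gamma_2},x^{a_1}y^{b_1}z^{c_1})$ is linked in one step to the complete intersection $J_0$, and we start at $J_\varepsilon=J_1$ after the relabeling. Otherwise, two links reach $J_0=S$ or a complete intersection $(x^{\alpha_1},y^{\beta_1},z^{\gamma_1})$. Combined with Theorem~\ref{thm:hu-criterion}, this terminates the chain. The convention $M_{2k+1}=\varnothing$ is not needed for this lemma.
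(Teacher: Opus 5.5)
Your overall route is the paper's: take $J_{2i}=I^{\{k-i\}}$, let $J_{2i-1}$ be the intermediate ideal of the Huneke--Ulrich double link, and read off $M_{2i}$ and $M_{2i-1}$ from which linking pure powers remain minimal generators. The genuine gap is the step you flag as ``the main obstacle'' and leave open. That step is the claim that the smallest pure $x$-power of $J_{2i-1}=(x^\alpha,y^\beta,z^\gamma):J_{2i}$ is exactly $x^{\alpha-a_i}$, and similarly for $y$ and $z$.

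Without this claim you have not shown that the odd-stage link uses the minimal pure powers of $J_{2i-1}$. Your even-stage equivalence is also only justified in one direction. The case $a_i>0$ puts $x^{\alpha-a_i}$ in the colon, but the converse ($a_i=0$ forces $x^{\alpha-1}\notin J_{2i-1}$) is the same unproved statement. Relatedly, your sentence that the pure powers of the colon ideal ``are $x^\alpha,y^\beta,z^\gamma$'' is false when $a_i>0$. In your first paragraph, the odd-stage sequence must consist of the minimal pure powers of $J_t$, not of $J_{t-1}$.

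The missing idea is short. The minimal generators of $K_i$ have gcd $1$ (or $K_i=S$), so some minimal generator $m$ of $K_i$ is not divisible by $x$. Then $u=x^{a_i}y^{b_i}z^{c_i}m$ is a minimal generator of $J_{2i}$ distinct from $y^\beta$ and $z^\gamma$. Hence $y^\beta\nmid u$ and $z^\gamma\nmid u$, so $x^{\alpha-a_i-1}u=x^{\alpha-1}y^{b_i}z^{c_i}m\notin(x^\alpha,y^\beta,z^\gamma)$. Together with $x^{\alpha-a_i}J_{2i}\subseteq(x^\alpha,y^\beta,z^\gamma)$, this pins the smallest pure $x$-power of $J_{2i-1}$ at $x^{\alpha-a_i}$. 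That gives both the odd-stage claim and the criterion ``$x^\alpha$ is minimal in $J_{2i-1}$ iff $a_i=0$''.

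Two smaller repairs are also needed. First, at the odd stage, the claim ``$x^{\alpha-a_i}$ is a minimal generator of $(x^{\alpha-a_i},y^{\beta-b_i},z^{\gamma-c_i})+K_i$ iff $K_i$ has no pure $x$-power'' needs the smallest pure $x$-power $x^e$ of $K_i$ to satisfy $e<\alpha-a_i$. This holds because $x^{a_i+e}y^{b_i}z^{c_i}$ is a minimal generator of $J_{2i}$ other than $x^\alpha$.

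Second, your base case is off by one. When $V_1=\{1\}$, the ideal $(x^{\alpha_2},y^{\beta_2},z^{\gamma_2},x^{a_1}y^{b_1}z^{c_1})$ is $J_2=I^{\{k-1\}}$, and the complete intersection reached by one link is $J_1=J_\varepsilon$. When $V_1\neq\{1\}$, the chain ends at the complete intersection $J_0=I^{\{k\}}$, not at $S$.

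With these fixes your argument coincides with the paper's. It is in fact slightly more complete, since the paper asserts the identification of the smallest pure power of $J_{2t-1}$ in one line without proof.
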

\begin{proof}
 We construct the intermediate ideals $J_t$ as follows. For $\varepsilon\leq t\leq k$, let $$J_{2t}=I^{\{k-t\}}.$$ From Section \ref{mprimarySection}, we know that $J_{2t}$ and $J_{2(t-1)}$ are linked via two direct links, let $J_{2t-1}$ denote the intermediate ideal. When $\varepsilon=1,$ we need to add one final link between $J_2$ and $J_1,$ where $J_1$ is a complete intersection. This defines the chain of links. From Section \ref{mprimarySection}, we know that the link at stage $t$ is obtained by taking the three minimal pure powers of $J_t.$ Suppose that $$ J_{2(t-1)}=I^{\{k-t+1\}}=(x^{\alpha-a},y^{\beta-b},z^{\gamma-c})+K, \qquad J_{2t}=I^{\{k-t\}}=(x^\alpha,y^\beta,z^\gamma)+x^a y^b z^c K,$$ where $K$ is a proper ideal and its monomial generators have gcd $1$. The ideals are linked via the regular sequences $\{x^{\alpha-a},y^{\beta-b},z^{\gamma-c}\}$ and $\{x^\alpha,y^\beta,z^\gamma\}$.  
 
If $K$ contains a pure $x$-power, its least exponent is strictly less
than $\alpha-a$ by minimality of the generators of $J_{2(t-1)}$. Otherwise
$x^{\alpha-a}$ is a minimal generator of $J_{2(t-1)}$. Also,
$x^{\alpha-a}$ is the smallest pure $x$-power in $J_{2t-1}$. Hence, $x^\alpha$ is a minimal generator of $J_{2t-1}$ if and only if $a=0.$ A similar argument holds for $y$ and $z.$

Consequently, the first link selects precisely the variables missing
from the corresponding set $V_i$, giving $M_{2i-1}$. The second
selects $x$ precisely when $a=0$, and similarly for $y,z$,
giving $M_{2i}$. This completes the proof.

\end{proof}

\begin{theorem}[Recursive formula]\label{thm:rec-new}
Initialize
\[
 \kappa_{\varepsilon}=1,\qquad
 \lambda_{\varepsilon,x}
 =\lambda_{\varepsilon,y}
 =\lambda_{\varepsilon,z}=1.
\]
For $\varepsilon<t\le2k$, define
\begin{align}
 p_t&=\kappa_{t-1}+1-\sum_{c\in M_t}\lambda_{t-1,c},
       \label{eq:new-p}\\
 \lambda_{t,c}
   &=p_t+\begin{cases}\lambda_{t-1,c},&c\in M_t,\\0,&c\notin M_t,\end{cases}
       \label{eq:new-label}\\
 \kappa_t&=\kappa_{t-1}+p_t.\label{eq:new-level}
\end{align}
Then $\kappa(I)=\kappa_{2k}$, and the decoration of $I$ is
\begin{align}
 \lambda(I)
 &=\operatorname{sort}\left(
     \{1\}^{\varepsilon}\cup \{\lambda_{2t,c}\mid \varepsilon\leq t\leq k\text{, }c\in\{x,y,z\}\setminus M_{2t+1}\}
     \right),\label{eq:new-collect-lambda}\\
     \mu(I)
 &=\operatorname{sort}\left(
     \{1\}^{1-\varepsilon}\cup \lambda_{2t-1,c}\mid 1\leq t\leq k\text{, } c\in\{x,y,z\}\setminus M_{2t}\}
     \right)\label{eq:new-collect-mu}. 
\end{align}
Here $\{1\}^0$ denotes the empty multiset and $\{1\}^1=\{1\}$. Also, the sets in \eqref{eq:new-collect-lambda}--\eqref{eq:new-collect-mu} are not really sets but multisets, since repetition of entries is allowed. 
\end{theorem}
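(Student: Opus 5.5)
We argue by induction along the chain of links $J_\varepsilon\sim J_{\varepsilon+1}\sim\cdots\sim J_{2k}=I$ from Lemma \ref{lem:chain-new}, applying Proposition \ref{LinkageCombinatoricsTheorem} at every step. The inductive statement, for $\varepsilon\le t\le 2k$, is the following. The level of $J_t$ is $\kappa_t$, and $J_t$ carries an SGS in which its three minimal pure powers $x,y,z$ receive the parts $\lambda_{t,x},\lambda_{t,y},\lambda_{t,z}$. Moreover, $\lambda(J_t)$ is the multiset consisting of these three parts together with the parts of $\mu(J_{t-1})$. Correspondingly, $\mu(J_t)$ is $\lambda(J_{t-1})$ with the parts $\lambda_{t-1,c}$, $c\in M_t$, removed. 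The base case $t=\varepsilon$ is property (5): $J_\varepsilon$ is a complete intersection with decoration $((1^3),(1))$, all pure powers labelled $1$, and $\kappa_\varepsilon=1$.

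For the inductive step, link $J_{t-1}$ via the three minimal pure powers of $J_t$. By Lemma \ref{lem:chain-new}, the element of this regular sequence of color $c$ equals the minimal pure $c$-power of $J_{t-1}$ exactly when $c\in M_t$. Hence, in the notation of Proposition \ref{LinkageCombinatoricsTheorem}, $\lambda'_c=\lambda_{t-1,c}$ for $c\in M_t$ and $\lambda'_c=0$ otherwise. The proposition then gives three things:
\begin{itemize}
\item $p=p_t$ as in \eqref{eq:new-p};
\item the new pure-power labels are exactly \eqref{eq:new-label}, by Lemmas 3.13 and 3.15 of \cite{LinkageSchutFuctors} (the SGS transfer statement after the proposition);
\item $\kappa(J_t)=\kappa_{t-1}+p_t$, which is \eqref{eq:new-level}.
\end{itemize}
The proposition also gives $\lambda(J_t)=\mathrm{sort}(\lambda_{t,x},\lambda_{t,y},\lambda_{t,z},\mu(J_{t-1}))$ and $\mu(J_t)=\lambda(J_{t-1})\setminus\{\lambda_{t-1,c}: c\in M_t\}$. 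This closes the induction, and in particular $\kappa(I)=\kappa_{2k}$.

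The final step is to unwind these two recursions into the collected forms \eqref{eq:new-collect-lambda} and \eqref{eq:new-collect-mu}. We have $\mu(J_t)=\lambda(J_{t-1})\setminus\{\lambda_{t-1,c}\}_{c\in M_t}$, and $\lambda(J_{t-1})$ consists of the labels $\lambda_{t-1,c}$ together with $\mu(J_{t-2})$. So $\mu(J_t)$ consists of $\{\lambda_{t-1,c}: c\notin M_t\}$ together with $\mu(J_{t-2})$. Iterating this in steps of two from $t=2k$ down to the base, the labels that survive in $\mu(I)$ are $\lambda_{2k-1-2j,c}$ with $c\notin M_{2k-2j}$. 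These are the odd-index labels of \eqref{eq:new-collect-mu}. The recursion terminates at either $\mu(J_0)=(1)$ (when $\varepsilon=0$) or $\mu(J_1)=(1)$ (when $\varepsilon=1$).

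Similarly, $\lambda(I)$ collects the labels $\lambda_{2k,c}$ for all $c$ (recall $M_{2k+1}=\varnothing$), then $\lambda_{2k-2,c}$ for $c\notin M_{2k-1}$, and so on down to the base. When $\varepsilon=1$, the base $\lambda(J_1)=(1^3)$ contributes its labels with $c\notin M_2$ plus the leftover $\mu(J_0)$-type entry. The base contributions therefore need care.

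We expect this bookkeeping at the base of the recursion to be the main obstacle. One must check that the leftover entry from the complete intersection, namely $\mu=(1)$ or the extra part $1$ of $\lambda$, lands in $\mu(I)$ when $\varepsilon=0$ and in $\lambda(I)$ when $\varepsilon=1$. This accounts for the factors $\{1\}^{1-\varepsilon}$ and $\{1\}^{\varepsilon}$. One must also check that the index ranges in \eqref{eq:new-collect-lambda} and \eqref{eq:new-collect-mu} match the parity of the chain. To handle this, we would treat the two cases $\varepsilon=0$ and $\varepsilon=1$ separately, and verify the first one or two links directly from Proposition \ref{LinkageCombinatoricsTheorem}. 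As a sanity check, counting parts should recover properties (1) and (2): the number of generators of $I$ and its type.
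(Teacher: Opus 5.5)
Your proposal follows essentially the same route as the paper's proof. You induct along the chain of Lemma~\ref{lem:chain-new} and read off $p_t$, the new pure-power labels and $\kappa_t$ from Proposition~\ref{LinkageCombinatoricsTheorem} together with the SGS transfer. You then collect the parts. Your two-step relation $\mu(J_t)=\{\lambda_{t-1,c}:c\notin M_t\}\cup\mu(J_{t-2})$ is the paper's observation that a surviving label alternates between the two partitions and is never removed again, since only the current pure-power labels $\lambda_{t-1,c}$ are ever removed.

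What you wrote about the base of the unwinding is wrong when $\varepsilon=1$, and correcting it removes your ``main obstacle''. The two-step relation holds only for $t\ge\varepsilon+2$. At $t=\varepsilon+1$ one has
\[
\mu(J_{\varepsilon+1})=\lambda(J_\varepsilon)\setminus\{\lambda_{\varepsilon,c}:c\in M_{\varepsilon+1}\}=\{\lambda_{\varepsilon,c}:c\notin M_{\varepsilon+1}\},
\]
because $\lambda(J_\varepsilon)=(1^3)$ consists exactly of the three pure-power labels.

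Stepping down by two from $\mu(J_{2k})$ visits only even indices. So for $\varepsilon=1$ this chain cannot end at $\mu(J_1)=(1)$. It ends at $\mu(J_2)=\{1\}^{3-|M_2|}$, which is the $t=1$ term of \eqref{eq:new-collect-mu}, and it contributes no extra $1$.

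It is the other chain that ends at $\mu(J_1)=(1)$ and supplies $\{1\}^{\varepsilon}$. That chain is
\[
\lambda(J_{2k})=\{\lambda_{2k,c}\}\cup\mu(J_{2k-1}),\qquad \mu(J_{2k-1})=\{\lambda_{2k-2,c}:c\notin M_{2k-1}\}\cup\mu(J_{2k-3}),\ \dots
\]
Likewise, your $\lambda(I)$ paragraph is off: the labels of $\lambda(J_1)$ land in $\mu(I)$, not in $\lambda(I)$.

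In general, a chain started at index $s$ ends at $\mu(J_\varepsilon)=(1)$ if $s-\varepsilon$ is even, and at $\mu(J_{\varepsilon+1})$ otherwise. Taking $s=2k$ for $\mu(I)$ and $s=2k-1$ for $\lambda(I)$ gives exactly $\{1\}^{1-\varepsilon}$ and $\{1\}^{\varepsilon}$. This is the paper's remark that the initial $1$ makes $2k-\varepsilon$ switches, so no separate case analysis or direct check of the first links is needed.
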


\begin{proof}
Consider the chain of links $$
 J_{\varepsilon}\sim J_{\varepsilon+1}\sim\cdots\sim J_{2k}=I
$$ coming from Lemma \ref{lem:chain-new}. Let $\kappa_t$ denote the level of $J_t$ and let $\lambda_{t,c}$ denote the part of the decoration of $J_t$ that corresponds to the minimal pure $c$-power in $J_t$ for $c\in \{x,y,z\}.$ The initial complete intersection has decoration $((1,1,1),(1))$, which explains the initial values. Lemma \ref{lem:chain-new} and Proposition \ref{LinkageCombinatoricsTheorem} tell us how to obtain the decoration of $J_t$ from the decoration of $J_{t-1}$. This gives
\eqref{eq:new-p}--\eqref{eq:new-level}.

It remains to collect the parts. The three labels obtained at stage $t$
are $\lambda_{t,x},\lambda_{t,y},\lambda_{t,z}$ in the first
partition. At stage $t+1$, precisely those indexed by $M_{t+1}$
are removed and the rest move to the second partition $\mu_{J_{t+1}}$. Each surviving
part subsequently alternates between the two partitions in the decorations of the $J_i$'s. It is never
selected again since every later selection uses the three current
pure-power generators, whose labels are the three new labels of
the preceding stage. Thus a surviving label obtained at stage $t$ lies
in the final first partition exactly when $2k-t$ is even.
The initial part $1$ in the second partition makes $2k-\varepsilon$
such switches, placing it in $\mu(I)$ for $\varepsilon=0$ and in
$\lambda(I)$ for $\varepsilon=1$. This proves
\eqref{eq:new-collect-lambda}--\eqref{eq:new-collect-mu}.
\end{proof}

\begin{example}\label{ex:rec-new}
For the ideal of Example \ref{ex:monomial-layer-data}, $\varepsilon=0$ and the recurrence gives
\[
\begin{array}{c|c|r|r|rrr}
 t&M_t&p_t&\kappa_t&\lambda_{t,x}&\lambda_{t,y}&\lambda_{t,z}\\ \hline
 0&\text{--}&\text{--}&1&1&1&1\\
 1&\{z\}&1&2&1&1&2\\
 2&\varnothing&3&5&3&3&3\\
 3&\varnothing&6&11&6&6&6\\
 4&\{x\}&6&17&12&6&6\\
 5&\{y,z\}&6&23&6&12&12\\
 6&\{x,y\}&6&29&12&18&6\\
 7&\{y\}&12&41&12&30&12\\
 8&\{y\}&12&53&12&42&12
\end{array}
\]
At even stages the surviving blocks are
$$
 \{1,1\},\quad\{3,3,3\},\quad\{12\},\quad\{12,6\},
 \quad\{12,42,12\},
$$
at odd stages they are
$$
 \{1,1,2\},\quad\{6,6\},\quad\{12\},\quad\{12,12\}.
$$
Sorting and adjoining the initial $1$ to the second partition yields
$$
 \lambda(I)=(42,12,12,12,12,6,3,3,3,1,1),
$$
$$
 \mu(I)=(12,12,12,6,6,2,1,1,1).
$$
In particular, $\kappa(I)=53$ and
$\lambda(I)$ has eleven parts, one for each minimal generator.
\end{example}

\subsection{Closed Formula}\label{Subsec:Closed}
We now give a closed formula for the recurrence relation in Theorem \ref{thm:rec-new} in terms of the combinatorial data defined in Section \ref{Section:CombinatorialData}. For an $\fm$-primary licci monomial ideal $I,$ let $G_I$ be the corresponding layered graph. We recall that when $\varepsilon=1$, we delete the first layer in $G_I$ (Remark \ref{varsepsilonRemark}). In particular, all the subgraphs $G_I^t$ start from the second layer in this case. For a special subgraph $L\subset G_I$, we define its weight
$$
 w(L)=(-1)^{v(L)}2^{-2v(L)+e(L)}.
$$
Let $\delta_t(L)=1$ if $L$ contains a vertex in layer $t$, and
$\delta_t(L)=0$ otherwise. 

\begin{theorem}[Closed formula]\label{thm:closed-new}
For $\varepsilon\le t\le2k$, the level in
Theorem~\ref{thm:rec-new} satisfies
\begin{equation}\label{eq:closed-kappa-new}
 \kappa_t+1
 =2^{t-\varepsilon+1}
 \sum_{L\subseteq G_I^t\ \mathrm{special}}w(L).
\end{equation}
Let $\lambda_{\varepsilon,c}=1$, and for $\varepsilon<t\le2k$ and $c\in\{x,y,z\}$,
\begin{equation}\label{eq:closed-label-new}
 \lambda_{t,c}
 =2^{t-\varepsilon}
 \sum_{\substack{L\subseteq G_I^t\ \mathrm{special}\\ t_c\notin V(L)}}
 w(L)\,2^{\delta_t(L)}.
\end{equation}
The restriction $t_c\notin V(L)$ is vacuous if $c\notin M_t$.
 Then, a closed formula for the Herzog class of $I$ is obtained by substituting \eqref{eq:closed-label-new} into
\eqref{eq:new-collect-lambda} and \eqref{eq:new-collect-mu}.
\end{theorem}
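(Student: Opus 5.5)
The plan is a simultaneous induction on $t$: show that the right-hand sides of \eqref{eq:closed-kappa-new} and \eqref{eq:closed-label-new} satisfy the recursions \eqref{eq:new-p}--\eqref{eq:new-level} of Theorem~\ref{thm:rec-new}, and have the same initial values. The final assertion about the Herzog class is then immediate from \eqref{eq:new-collect-lambda} and \eqref{eq:new-collect-mu}.

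Introduce the generating sums
$$
S_t=\sum_{L\subseteq G_I^t\ \mathrm{special}} w(L),\qquad
T_{t,c}=\sum_{\substack{L\subseteq G_I^t\ \mathrm{special}\\ t_c\notin V(L)}} w(L)\,2^{\delta_t(L)} .
$$
The claims become $\kappa_t+1=2^{t-\varepsilon+1}S_t$ and $\lambda_{t,c}=2^{t-\varepsilon}T_{t,c}$.

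\emph{Base case.} Take $t=\varepsilon$. Then $G_I^{\varepsilon}$ is empty: for $\varepsilon=1$ this holds because the first layer has been deleted. So only $L=\varnothing$ contributes, giving $S_\varepsilon=1$ and $T_{\varepsilon,c}=1$. Hence $2^{1}S_\varepsilon=2=\kappa_\varepsilon+1$ and $2^0T_{\varepsilon,c}=1=\lambda_{\varepsilon,c}$. This is consistent with the stipulated $\lambda_{\varepsilon,c}=1$, and it is exactly what the induction step needs.

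\emph{Key step: decomposition by the top layer.} A special subgraph $L\subseteq G_I^t$ either has no vertex in layer $t$, in which case it is an arbitrary special $L'\subseteq G_I^{t-1}$, or it is $L'\cup\{t_d\}$ with $d\in M_t$. In the second case $L'\subseteq G_I^{t-1}$ is special and does not contain $(t-1)_d$, because vertices of equal color in consecutive layers are not adjacent and every other vertex of layer $t-1$ is adjacent to $t_d$. Adding $t_d$ raises $v$ by one, and raises $e$ by $\delta_{t-1}(L')$. Therefore
$$
w(L)=-\tfrac14\, w(L')\,2^{\delta_{t-1}(L')} .
$$
Summing over both cases gives
$$
S_t=S_{t-1}-\tfrac14\sum_{d\in M_t}T_{t-1,d}.
$$
Similarly, for $T_{t,c}$ we exclude $t_c$; each $L$ that has a vertex in layer $t$ carries the extra factor $2^{\delta_t(L)}=2$. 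This gives
$$
T_{t,c}=S_{t-1}-\tfrac12\sum_{d\in M_t\setminus\{c\}}T_{t-1,d}.
$$

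\emph{Comparison with the recursion.} Multiply the two identities by $2^{t-\varepsilon+1}$ and $2^{t-\varepsilon}$ respectively, and apply the induction hypothesis. This yields
$$
2^{t-\varepsilon+1}S_t=2(\kappa_{t-1}+1)-\sum_{d\in M_t}\lambda_{t-1,d}
$$
and
$$
2^{t-\varepsilon}T_{t,c}=(\kappa_{t-1}+1)-\sum_{d\in M_t\setminus\{c\}}\lambda_{t-1,d}.
$$
From \eqref{eq:new-p} and \eqref{eq:new-level} we have $\kappa_t+1=\kappa_{t-1}+p_t+1=2(\kappa_{t-1}+1)-\sum_{d\in M_t}\lambda_{t-1,d}$, which matches the first identity. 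From \eqref{eq:new-label}, $\lambda_{t,c}=p_t+[c\in M_t]\lambda_{t-1,c}=\kappa_{t-1}+1-\sum_{d\in M_t\setminus\{c\}}\lambda_{t-1,d}$, which matches the second. When $c\notin M_t$ the restriction $t_c\notin V(L)$ is vacuous, as the statement notes.

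The only delicate point is the bookkeeping in the decomposition step. One must check that the edge count increases by exactly $\delta_{t-1}(L')$. One must also check that the factor $2^{\delta_{t-1}}$ in $T_{t-1,d}$ correctly absorbs the new edge, and that in the $\varepsilon=1$ case the deleted first layer makes the base case $G_I^1=\varnothing$. Once these are verified, the induction closes.
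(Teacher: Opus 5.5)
Your proposal is correct and follows essentially the same route as the paper. Both argue by induction on $t$: split the special subgraphs according to whether they meet layer $t$, delete the top vertex $t_d$ to obtain the factor $-\tfrac14\,w(L')\,2^{\delta_{t-1}(L')}$, and match the resulting identities against the rearranged recurrences $\kappa_t+1=2(\kappa_{t-1}+1)-\sum_{d\in M_t}\lambda_{t-1,d}$ and $\lambda_{t,c}=\kappa_{t-1}+1-\sum_{d\in M_t\setminus\{c\}}\lambda_{t-1,d}$; your auxiliary sums $S_t$ and $T_{t,c}$ only make the paper's bookkeeping more explicit.
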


\begin{proof}
We show directly that the two finite sums satisfy the recurrences
\begin{align}
 \kappa_t+1
 &=2(\kappa_{t-1}+1)-\sum_{d\in M_t}\lambda_{t-1,d},
 \label{eq:proof-kappa-new}\\
 \lambda_{t,c}
 &=\kappa_{t-1}+1-
                 \sum_{d\in M_t\setminus\{c\}}\lambda_{t-1,d}.
 \label{eq:proof-label-new}
\end{align}
These are equivalent to \eqref{eq:new-p}--\eqref{eq:new-level}.
At $t=\varepsilon$, only the empty subgraph occurs, so the first
sum gives $\kappa_{\varepsilon}+1=2$.

Proceed by induction on $t$. Divide the subgraphs in
the summation of \eqref{eq:closed-kappa-new} according to whether they meet layer $t$.
The sum over those that do not meet it is $2(\kappa_{t-1}+1)$ by the induction
hypothesis.

Now, if a special subgraph contains $t_d$, where $d$ is a fixed color, delete that vertex to obtain
$L'$. The subgraph $L'$ cannot contain $(t-1)_d$. Conversely, every
eligible special $L'\subseteq G_I^{t-1}$ that does not contain $(t-1)_d$ extends
uniquely in this way. Adding $t_d$ contributes one vertex and
$\delta_{t-1}(L')$ edges, so
\[
 w(L'\cup\{t_d\})
 =-\frac14\,w(L')\,2^{\delta_{t-1}(L')}.
\]
Using the induction hypothesis for \eqref{eq:closed-label-new}, we see that the sum over all $L'$ along with the factor $2^{t-\varepsilon+1}$ gives $-\lambda_{t-1,d}$.
We note that for $t=\varepsilon+1$, the same calculation uses only the empty
$L'$ and $\lambda_{\varepsilon,d}=1$. Finally, 
summing over $d\in M_t$ proves \eqref{eq:proof-kappa-new}.

For \eqref{eq:closed-label-new}, the sum over subgraphs avoiding layer $t$
contribute $\kappa_{t-1}+1$. For the remaining sum, a subgraph meeting layer $t$ can contain $t_d$ precisely when $d\in M_t\setminus\{c\}$. The additional vertex along with $2^{\delta_t(L)}$ contributes a factor of
$-\frac{1}{2}$. Together with the coefficient $2^{t-\varepsilon}$, summing over subgraphs that contain $t_d$ 
contribute a total of $-\lambda_{t-1,d}$. Finally, summing over $d\in M_t\backslash \{c\}$ proves \eqref{eq:proof-label-new}, completing the induction argument.
\end{proof}

\begin{example}
We return to the ideal from Example \ref{ex:monomial-layer-data} and compute its
decoration using the closed formula. Recall that
\[
(M_1,\ldots,M_8)
=
(\{z\},\varnothing,\varnothing,\{x\},
 \{y,z\},\{x,y\},\{y\},\{y\}).
\]
Here $\varepsilon=0$. We illustrate the calculation by computing
$\lambda_{8,y}$.

Since $8_y$ is the only vertex in layer $8$, the restriction
$8_y\notin V(L)$ forces $L$ to lie in $G_I^7$. In particular,
$\delta_8(L)=0$, so the closed formula becomes
\[
\lambda_{8,y}
=
2^8
\sum_{\substack{L\subseteq G_I^7\\L\text{ special}}}
(-1)^{v(L)}2^{-2v(L)+e(L)}.
\]
The vertex $1_z$ is isolated and is separated from the remaining
vertices by two empty layers. Thus including or omitting $1_z$
contributes a factor of $1-\frac14=\frac34$.

Let $H$ be the induced subgraph on layers $4,\ldots,7$.
For each $j$, grouping its special subgraphs by their number
of vertices gives
\[
\begin{array}{c|ccccc}
j&0&1&2&3&4\\ \hline
\displaystyle
\sum_{\substack{L\subseteq H\text{ special}\\v(L)=j}}
2^{e(L)}
&1&6&17&26&16.
\end{array}
\]
For example, among the special subgraphs with two vertices,
five have no edge and six have one edge, giving
$5+6\cdot2=17$. Consequently,
\[
\begin{aligned}
\lambda_{8,y}
&=
2^8\cdot\frac34
\left(
1-\frac64+\frac{17}{16}
-\frac{26}{64}+\frac{16}{256}
\right)\\
&=
2^8\cdot\frac34\cdot\frac7{32}
=42.
\end{aligned}
\]

The remaining parts follow similarly from the closed formula. After computing all the parts, we obtain

\[
\begin{aligned}
\lambda(I)&=(42,12,12,12,12,6,3,3,3,1,1),\\
\mu(I)&=(12,12,12,6,6,2,1,1,1).
\end{aligned}
\]
This coincides with the calculation in Example \ref{ex:rec-new}.
\end{example}

In \cite{LinkageSchutFuctors}, the authors show that for fixed format $(1,6,8,3),$ there are infinitely many Herzog classes of licci ideals having minimal free resolution of this format. We show that this is not the case when we restrict to $\fm$-primary monomial ideals.

\begin{corollary}
    For a fixed number of minimal generators, only finitely many Herzog classes occur among codimension $3$ $\fm$-primary licci monomial ideal. 
\end{corollary}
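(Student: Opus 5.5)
The plan is to bound the length $2k$ of the canonical linkage chain in terms of the number $n=\mu(I)$ of minimal generators. Once $k$ is bounded, the admissible families $(M_t)$ of Remark \ref{inverseProblemRemark} range over a finite set. By Theorem \ref{thm:rec-new} the decoration $(\lambda,\mu)$, and hence the Herzog class, depends only on $\varepsilon$ and the sequence $M_1,\dots,M_{2k}$. So only finitely many decorations can occur, and since decorations parametrize Herzog classes univocally, the corollary follows.

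For the bound I would use Proposition \ref{DataProp}. The minimal generating set of $I$ is the disjoint union of the sets $\{U_i v\mid v\in V_i\}$ for $1\le i\le k+1$, so
$$n=\sum_{i=1}^{k+1}|V_i|.$$
Each $V_i$ is nonempty: for $i\ge 2$ it consists of at least one pure power, and $V_1$ is either a nonempty set of pure powers or $\{1\}$. Also $|V_{k+1}|=3$. Hence $n\ge k+3$, that is, $k\le n-3$. The number of admissible families of subsets of $\{x,y,z\}$ of length $2k\le 2(n-3)$, together with the choice of $\varepsilon\in\{0,1\}$, is therefore at most $2\sum_{k\le n-3}8^{2k}$, which is finite.

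The one point needing care is nonemptiness of the $V_i$. I would argue it from the Huneke--Ulrich construction. At each stage the ideal $I^{\{j\}}$ has standard form $(x^{\alpha},y^{\beta},z^{\gamma})+I^{\#}$, and the new layer $V$ records the pure powers among the minimal generators of $K$, where $I^{\#}=x^ay^bz^cK$. If $K$ had no pure power and were a proper ideal of height at least two, it would simply be absorbed into the next layer, with the gcd recorded as a triple. However, the procedure in Proposition \ref{DataProp} defines a new layer exactly when we isolate pure powers after dividing by the gcd. So I would double-check, using Theorem \ref{thm:hu-criterion}(3), that whenever $I^{\{j\}}\neq S$ and $K\neq S$, the ideal $K$ contains a pure power. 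If $K$ contained none, then $I^{\{j+1\}}$ would have the same pure-power part, and iterating would never reach $S$. If $k$ counts only genuine layers, $V_i\neq\varnothing$ follows directly from how the layers are recorded.

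The main obstacle is this bookkeeping step, namely justifying that empty layers cannot occur, or handling them if they can. As a fallback, if empty $V_i$ were possible, I would instead bound $k$ using $|\{x,y,z\}\setminus M_{2i-1}|=|V_i|$ and a lemma showing that an empty layer forces the gcd at the next step to be trivial. That would contradict $(a_i,b_i,c_i)\neq(0,0,0)$, and the same counting bound on $k$ would then follow.
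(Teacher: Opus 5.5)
Your proposal is correct and follows essentially the same route as the paper. You bound the length of the canonical linkage chain in terms of the number of generators, so that only finitely many data $(\varepsilon,M_1,\dots,M_{2k})$ (equivalently, graphs $G_I$) can occur, and then note that the decoration is determined by these data: you use Theorem~\ref{thm:rec-new}, the paper uses its closed form Theorem~\ref{thm:closed-new}. Your count $n=\sum_i|V_i|\ge k+3$, with each $V_i\neq\varnothing$ justified by the stalling argument via Theorem~\ref{thm:hu-criterion}(3), is a sharper and more explicit version of the paper's unproved remark that each double link removes at least one generator, which gives at most $2b$ layers.
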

\begin{proof}
    Let $b$ denote the minimal number of generators. Let $G_I$ denote the graph of an $\fm$-primary licci monomial ideal generated minimally by $b$ elements. By Theorem \ref{thm:closed-new}, it suffices to show that there are only finitely many possibilities for $G_I$. 

    Now, the number of layers in $G_I$ is bounded above by $2b,$ since each double link in the Huneke--Ulrich algorithm gets rid of at least one generator. Since each layer has at most two vertices, the total number of vertices in $G_I$ is bounded above by $4b.$ In particular, this implies that there are only finitely many graphs possible for given $b$, which completes the proof.
\end{proof}

\begin{example}[Generic linkage]

    In the case when the graph $G_I$ is empty, i.e., when $M_i$ is empty for all $i,$ Theorem \ref{thm:closed-new} yields  

    \[
\begin{aligned}
\lambda(I)&=(2^{2k},2^{2k},2^{2k},2^{2k-2},2^{2k-2},2^{2k-2},\dots 1,1,1),\\
\mu(I)&=(2^{2k-1},2^{2k-1},2^{2k-1},2^{2k-3},2^{2k-3},2^{2k-3},\dots ,2,2,2,1)
\end{aligned}
\] when $V_1\neq (1),$ and $$\begin{aligned}
\lambda(I)&=(2^{2k-1},2^{2k-1},2^{2k-1},2^{2k-3},2^{2k-3},2^{2k-3},\dots 2,2,2,1),\\
\mu(I)&=(2^{2k-2},2^{2k-2},2^{2k-2},2^{2k-4},2^{2k-4},2^{2k-4},\dots 1,1,1)
\end{aligned}
$$ when $V_1=(1)$. This happens precisely when each of the links that connect $I$ to a complete intersection is a generic link.  
\end{example}

\begin{example}
Another case that simplifies significantly is when $D$ has no consecutive elements.  This happens when at least one link out of any two given consecutive links is generic.

Every special subgraph is obtained by independently choosing either
no vertex or one vertex from each occupied layer. In this situation, by Remark \ref{inverseProblemRemark} every occupied layer has exactly one vertex. Since there are no edges, Theorem \ref{thm:closed-new} gives
\[
 \kappa_t+1
 =
 2^{t-\varepsilon+1}
 \prod_{j=\varepsilon+1}^{t}
 \left(1-\frac{|M_j|}{4}\right)
\] since you either pick nothing from the $j$th layer, which corresponds to choosing the $1$ for that respective term, or pick a vertex in $M_j,$ which contributes a factor of $\frac{-1}{4}.$

Similarly, for $\varepsilon<t\le 2k$,
\[
 \lambda_{t,c}
 =
 2^{t-\varepsilon}
 \prod_{j=\varepsilon+1}^{t-1}
 \left(1-\frac{|M_j|}{4}\right)
 \left(1-\frac{|M_t\setminus\{c\}|}{2}\right).
\]

Let $d_t$ denote the number of non-empty layers between $\varepsilon+1$ and $t.$ The level therefore simplifies to
\[
 \kappa(I)+1=2^{2k-\varepsilon+1-2d_{2k}}3^{d_{2k}}.
\]
For $\varepsilon<t\le 2k$, set
\[
 Q_t=2^{t-\varepsilon-2d_{t-1}}3^{d_{t-1}}.
\]
If $M_t=\varnothing$, all three labels at stage $t$ equal $Q_t$.
If $M_t=\{c\}$, the label of color $c$ equals $Q_t$, while
the other two labels equal $Q_t/2$.

We can now write the decoration explicitly. Let $B_t$ denote
the multiset of labels generated at stage $t$ that survive the next
link. Then,
\[
 B_\varepsilon=\{1\}^{\,3-|M_{\varepsilon+1}|},
 \qquad
 B_t=
 \begin{cases}
   \{Q_t\}^{\,3-|M_{t+1}|},
       & M_t=\varnothing,\\[2pt]
   \{Q_t,Q_t/2,Q_t/2\},
       & M_t\ne\varnothing.
 \end{cases}
\]
Here $\{1\}^r$ denotes the multiset containing $r$ copies of $1$.

Consequently,
\[
 \lambda(I)=
 (B_{2k}, B_{2k-2},\dots, B_{2\varepsilon},1^\varepsilon)
 \qquad
 \mu(I)= (B_{2k-1}, B_{2k-3},\dots, B_1,1^{1-\varepsilon}) 
\]
In particular, each part is a product of powers of $2$ and $3.$
\end{example}

\bibliographystyle{alpha}
\bibliography{reference}
\end{document}